\definecolor{webgreen}{rgb}{0,.5,0}
\definecolor{webbrown}{rgb}{.6,0,0}
\tikzset{circle node/.style = {circle,inner sep=1pt,draw, fill=white},
        X node/.style = {fill=white, inner sep=1pt},
        dot node/.style = {circle, draw, inner sep=5pt}
        }
\newtheorem{theorem}{Theorem}
\newtheorem{proposition}[theorem]{Proposition}
\newtheorem{corollary}[theorem]{Corollary}
\theoremstyle{definition}
\newcommand{\seqnum}[1]{\href{http://oeis.org/#1}{\underline{#1}}}
\begin{document}

\begin{center}
\vskip 1cm{\LARGE\bf The $1/k$-Eulerian Polynomials as Moments, via Exponential Riordan Arrays} \vskip 1cm \large
Paul Barry\\
School of Science\\
Waterford Institute of Technology\\
Ireland\\
\href{mailto:pbarry@wit.ie}{\tt pbarry@wit.ie}
\end{center}
\vskip .2 in

\begin{abstract} Using the theory of exponential Riordan arrays, we show that the $1/k$-Eulerian polynomials are moments for a paramaterized family of orthogonal polynomials. In addition, we show that the related Savage-Viswanathan polynomials are also moments for appropriate families of orthogonal polynomials. We provide continued fraction ordinary generating functions and Hankel transforms for these moments, as well as the three-term recurrences for the corresponding orthogonal polynomials. We provide formulas for the $1/k$-Eulerian polynomials and the Savage-Viswanathan polynomials involving the Stirling numbers of the first and the second kind. Finally we show that the once-shifted polynomials are again moment sequences. \end{abstract}

\section{Introduction}

The $1/k$-Eulerian polynomials have been defined and characterized by Savage and Viswanathan \cite{Savage}. In doing so, they showed that these polynomials are specializations of bivariate polynomials $F_n(x,y)$ that have been studied previously by Carlitz \cite{Carlitz}, by Dillon and Roselle \cite{Dillon}, and by Foata and Schutzenberger \cite{Foata}. In this note, we shall call these polynomials the Savage-Viswanathan polynomials, given the depth of coverage that these two authors afford them.

The Savage-Viswanathan polynomials can be defined by their exponential generating function, which is given by
\begin{equation}\label{SV} \sum_{n=0}^{\infty} F_n(x,y) \frac{z^n}{n!} = \left(\frac{1-x}{e^{z(x-1)}-x}\right)^y. \end{equation}

They begin
$$1, y, y(x + y), y(x^2 + x(3y + 1) + y^2), y(x^3 + x^2(7y + 4) + x(6y^2 + 4y + 1) + y^3),\ldots.$$
The authors Savage and Viswanathan \cite{Savage} show that
$$F_n(x,y)=\sum_{\pi \in S_n} x^{exc(\pi)}y^{\#cyc(\pi)},$$
where $S_n$ is the set of permutations $\pi:\{1,2,\ldots,n\} \to \{1,2,\ldots,n\}$, $exc(\pi)=\#\{i|\pi(i)>i\}$, and $\#cyc(\pi)$ is the number of cycles in the disjoint cycle representation of $\pi$.

We then have
$$A_n^{(k)}(x) = k^n F_n(x,1/k)$$ where $A_n^{(k)}(x)$ denotes the $n$-th $1/k$-Eulerian polynomial. The exponential generating function for these polynomials is then \cite{Savage}
$$  \sum_{n=0}^{\infty} A_n^{(k)}(x)\frac{z^n}{n!}=\left(\frac{1-x}{e^{kx(x-1)}-x}\right)^{1/k}.$$

They begin
$$1, 1, k x + 1, k^2x^2 + kx(k + 3) + 1, k^3x^3 + k^2x^2(4k + 7) + kx(k^2 + 4·k + 6) + 1,\ldots.$$

In a previous note \cite{Eulerian}, we have shown that the Eulerian polynomials are moments for a family of orthogonal polynomials whose coefficient array is an exponential Riordan array. In this current note, we wish to prove a similar result for both the $1/k$-Eulerian polynomials and the Savage-Viswanathan polynomials. Thus we wish to prove the following results.

\begin{proposition} The $1/k$-Eulerian polynomials are the moments for a family of orthogonal polynomials whose coefficient array is given by the exponential Riordan array
$$\left[\frac{1}{(1+kz)^{1/k}}, \frac{1}{k(1-x)} \ln\left(\frac{1+kz}{1+kxz}\right)\right].$$ The $1/k$-Eulerian polynomials then appear as the initial column of the inverse array
$$\left[\left(\frac{1-x}{e^{kz(x-1)}-x}\right)^{1/k}, \frac{e^{kz}-e^{kxz}}{k(e^{kxz}-x e^{kz})}\right].$$
\end{proposition}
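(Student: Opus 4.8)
The plan is to use the standard dictionary between exponential Riordan arrays and orthogonal polynomial sequences. Recall the general principle already exploited in the companion note on the Eulerian polynomials: if a lower triangular array $L$ is the coefficient array of a family of monic orthogonal polynomials $P_n(t)$, normalized so that the zeroth moment is $1$, then the moment sequence is exactly the initial column of $L^{-1}$. Consequently the proposition splits into two essentially independent tasks. First, I must verify that the displayed inverse array is correct and that its initial column is the sequence of $1/k$-Eulerian polynomials. Second, I must verify that $L=\left[\frac{1}{(1+kz)^{1/k}},\,\frac{1}{k(1-x)}\ln\frac{1+kz}{1+kxz}\right]$ genuinely is an orthogonal coefficient array, i.e.\ that the associated polynomials obey a three-term recurrence.

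For the inverse I would invoke the formula $[g,f]^{-1}=\left[1/(g\circ\bar f),\,\bar f\right]$, where $\bar f$ is the compositional inverse of $f$. Writing $u=f(z)$ and exponentiating gives $e^{k(1-x)u}=\frac{1+kz}{1+kxz}$, an equation linear in $z$; solving it yields $\bar f(u)=\frac{e^{ku}-e^{kxu}}{k\left(e^{kxu}-xe^{ku}\right)}$, the claimed second component. A short computation then gives $1+k\bar f(u)=\frac{(1-x)e^{k(1-x)u}}{1-xe^{k(1-x)u}}$, whence $1/g(\bar f(u))=\bigl(1+k\bar f(u)\bigr)^{1/k}=\left(\frac{1-x}{e^{ku(x-1)}-x}\right)^{1/k}$, the claimed first component. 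Since the initial column of an exponential Riordan array $[G,F]$ has exponential generating function $G$, the initial column of $L^{-1}$ has e.g.f.\ $\left(\frac{1-x}{e^{kz(x-1)}-x}\right)^{1/k}$, which is precisely the generating function recorded in the introduction for the $A_n^{(k)}(x)$. Hence the initial column of $L^{-1}$ is the sequence of $1/k$-Eulerian polynomials, as asserted.

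The substantive step is orthogonality. Here I would apply the established criterion for an exponential Riordan array $[g,f]$ to be an orthogonal coefficient array: this holds precisely when the production matrix of the inverse (moment) array $L^{-1}$ is tridiagonal, which in turn is equivalent to the two scalar conditions that $\frac{1}{f'(z)}$ be a polynomial in $z$ of degree at most two and that $\frac{g'(z)}{g(z)f'(z)}$ be a polynomial of degree at most one. Both are immediate once one computes $f'(z)=\frac{1}{(1+kz)(1+kxz)}$ and $\frac{g'(z)}{g(z)}=\frac{-1}{1+kz}$: indeed $\frac{1}{f'(z)}=1+k(1+x)z+k^2xz^2$ and $\frac{g'(z)}{g(z)f'(z)}=-(1+kxz)$. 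The generating-function description of the production matrix then reads off the tridiagonal (Jacobi) data, giving three-term recurrence coefficients $\beta_j=1+jk(1+x)$ and $\gamma_j=jkx\bigl(1+(j-1)k\bigr)$; since these are strictly positive for $k,x>0$ the polynomials form a genuine orthogonal family, and the representation $\mu_n=(P^n)_{0,0}$ in terms of the production matrix $P$ reproduces the opening terms $1,1,kx+1,\dots$ as a consistency check.

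I expect the main obstacle to be the orthogonality verification, and more precisely the bookkeeping in passing from $L$ to the production matrix of $L^{-1}$: one must keep track of the fact that it is the inverse (moment) array whose production matrix is the Jacobi matrix, not $L$ itself, since computing the production matrix of $L$ directly produces non-affine data and would wrongly suggest failure of tridiagonality. Once the two conditions on $f'$ and $g'/g$ are in hand the remainder is routine, and the identification of the initial column with the $A_n^{(k)}(x)$ follows directly from the exponential generating function already supplied.
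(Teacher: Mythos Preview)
Your proposal is correct and follows essentially the same route as the paper: both arguments compute the $A$- and $Z$-series attached to the moment array and observe that they are quadratic and linear, so the production matrix is tridiagonal. The only cosmetic difference is that the paper evaluates $A(z)=f'(\bar f(z))$ and $Z(z)=g'(\bar f(z))/g(\bar f(z))$ directly from the components of $L^{-1}$, whereas you obtain the same quantities via the equivalent formulas $A=1/f'$ and $Z=-g'/(gf')$ in terms of the components of $L$; your added verification of the inverse pair and of the initial column's e.g.f.\ is a welcome bonus that the paper leaves implicit.
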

\begin{corollary}
The $1/k$-Eulerian polynomials are the moments for the family of orthogonal polynomials $P_n^{(k)}(z)$ defined by the three-term recurrence
$$P_n^{(k)}(z)=(z-(k(n-1)(1+x)+1))P_{n-1}^{(k)}(z)-(k^2 x(n^2-3n+2)+kx(n-1))P_{n-2}^{(k)}(z),$$ with
$P_0^{(k)}(z)=1$ and $P_1^{(k)}(z)=z-1$.
\end{corollary}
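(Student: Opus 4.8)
The plan is to extract the recurrence directly from the production (Stieltjes) matrix of the moment array, via the standard correspondence between exponential Riordan arrays and orthogonal polynomials. Write the coefficient array from the Proposition as $L=[g,f]$ with $g(z)=(1+kz)^{-1/k}$ and $f(z)=\frac{1}{k(1-x)}\ln\!\left(\frac{1+kz}{1+kxz}\right)$, so that the inverse array $L^{-1}$ displayed in the Proposition is the moment array whose initial column is the $1/k$-Eulerian sequence. For a coefficient array of \emph{monic} orthogonal polynomials, the recurrence $P_n(z)=(z-\alpha_{n-1})P_{n-1}(z)-\beta_{n-1}P_{n-2}(z)$ is encoded exactly in the production matrix of $L^{-1}$: this is the tridiagonal Jacobi matrix with diagonal entries $\alpha_n$, subdiagonal entries $\beta_{n+1}$, and $1$'s on the superdiagonal. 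So it suffices to compute that production matrix.

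I would compute it through the column generating functions. If $c_m(z)=g(z)f(z)^m/m!$ is the EGF of column $m$ of $L$, then $c_m'(z)=\frac{g'}{g}\,c_m+f'\,c_{m-1}$, and transporting this relation across the inversion shows that the production matrix of $L^{-1}$ is controlled by the two series
$$\Psi(z)=\frac{1}{f'(z)},\qquad \Phi(z)=-\frac{g'(z)}{g(z)\,f'(z)},$$
its $m$-th column having generating function $\Phi(z)\frac{z^m}{m!}+\Psi(z)\frac{z^{m-1}}{(m-1)!}$. The matrix is tridiagonal precisely when $\Phi$ has degree at most $1$ and $\Psi$ degree at most $2$; writing $\Phi(z)=\phi_0+\phi_1 z$ and $\Psi(z)=\psi_0+\psi_1 z+\psi_2 z^2$, the factorial weights then yield $\alpha_n=\phi_0+n\psi_1$ and $\beta_{n+1}=(n+1)(\phi_1+n\psi_2)$.

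It remains to evaluate $\Phi$ and $\Psi$ for the present $g,f$ and to reconcile indices. The key simplification is that $f'(z)=\frac{1}{(1+kz)(1+kxz)}$, so $\Psi(z)=(1+kz)(1+kxz)=1+k(1+x)z+k^2x\,z^2$; and since $g'(z)/g(z)=-\frac{1}{1+kz}$, one gets $\Phi(z)=(1+kz)(1+kxz)/(1+kz)=1+kx\,z$. Both have the required degree, which reconfirms the orthogonality asserted in the Proposition. Substituting $\phi_0=1,\ \phi_1=kx,\ \psi_1=k(1+x),\ \psi_2=k^2x$ gives $\alpha_n=1+nk(1+x)$ and $\beta_n=nkx\bigl(k(n-1)+1\bigr)$; replacing $n$ by $n-1$ reproduces $k(n-1)(1+x)+1$ and $k^2x(n^2-3n+2)+kx(n-1)$ exactly, while $\alpha_0=1$ gives $P_0=1$ and $P_1=z-1$. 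The only real difficulty here is bookkeeping rather than mathematics: carrying the factorial weights that convert the constants $\psi_1,\psi_2$ into the linear and quadratic $n$-dependence, and keeping track of the shift to $\alpha_{n-1},\beta_{n-1}$; the algebraic core, the collapse of $1/f'$ and $-g'/(gf')$ to low-degree polynomials, is short and is essentially the content of the Proposition.
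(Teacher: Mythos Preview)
Your proof is correct and follows essentially the same route as the paper: both compute the production matrix of the moment array, verify it is tridiagonal via the pair of generating functions, and read off the diagonal and subdiagonal entries to obtain $\alpha_n$ and $\beta_n$. The only difference is cosmetic---you parametrize from the coefficient-array side (computing $\Psi=1/f'$ and $\Phi=-g'/(gf')$ directly from $g(z)=(1+kz)^{-1/k}$ and $f(z)=\frac{1}{k(1-x)}\ln\frac{1+kz}{1+kxz}$), whereas the paper works from the moment-array side (computing $A(z)=f'(\bar f(z))$ and $Z(z)=g'(\bar f(z))/g(\bar f(z))$ for the inverse pair); the resulting functions $(1+kz)(1+kxz)$ and $1+kxz$, and hence the recurrence coefficients, are identical.
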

\begin{corollary} The $1/k$-Eulerian polynomials have their ordinary generating function given by the continued fraction
$$\cfrac{1}{1-
\cfrac{z}{1-
\cfrac{kxz}{1-
\cfrac{(k+1)z}{1-
\cfrac{2kxz}{1-
\cfrac{(2k+1)z}{1-
\cfrac{3kxz}{1-
\cfrac{(3k+1)z}{1-\cdots}}}}}}}},$$ or equivalently
$$\cfrac{1}{1-z-
\cfrac{kxz^2}{1-(kx+k+1)z-
\cfrac{2kx(k+1)z^2}{1-(2kx+2k+1)z-
\cfrac{3kx(2k+1)z^2}{1-(3kx+3k+1)z-\cdots}}}}.$$
\end{corollary}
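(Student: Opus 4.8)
The plan is to invoke the classical correspondence between the moment sequence of a family of monic orthogonal polynomials and the Jacobi continued fraction built from the coefficients of their three-term recurrence. Since the preceding Corollary already furnishes the recurrence for the polynomials $P_n^{(k)}(z)$ whose moments are the $1/k$-Eulerian polynomials, the second (J-fraction) displayed expansion should fall out almost immediately, and the first (Stieltjes-type) expansion will follow from the standard even-contraction formula.

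First I would recall the Stieltjes--Jacobi theorem (see Flajolet's combinatorial theory of continued fractions): if monic orthogonal polynomials satisfy $P_{n+1}(z)=(z-\alpha_n)P_n(z)-\beta_n P_{n-1}(z)$, then the ordinary generating function of their moments $\mu_n$ is
$$\sum_{n\ge 0}\mu_n z^n=\cfrac{1}{1-\alpha_0 z-\cfrac{\beta_1 z^2}{1-\alpha_1 z-\cfrac{\beta_2 z^2}{1-\alpha_2 z-\cdots}}}.$$
Re-indexing the recurrence of the previous Corollary (replace $n$ by $n+1$) gives $\alpha_n=kn(1+x)+1$ and, after factoring $(n+1)^2-3(n+1)+2=n(n-1)$, the simplification $\beta_n=k^2x\,n(n-1)+kxn=kxn\bigl(k(n-1)+1\bigr)$. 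Substituting these into the J-fraction and checking the low-order coefficients ($\alpha_0=1$, $\beta_1=kx$, $\alpha_1=kx+k+1$, $\beta_2=2kx(k+1)$, and so on) reproduces the second displayed continued fraction verbatim.

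To obtain the first (Stieltjes-type) continued fraction, I would use its even contraction. Writing the first expansion as $1/(1-c_1z/(1-c_2z/(1-\cdots)))$ with $c_1=1$, $c_{2j}=jkx$ and $c_{2j+1}=jk+1$, the contraction identities $\alpha_0=c_1$, $\alpha_n=c_{2n}+c_{2n+1}$, and $\beta_n=c_{2n-1}c_{2n}$ must be verified; indeed $c_{2n}+c_{2n+1}=nkx+(nk+1)=kn(1+x)+1=\alpha_n$ and $c_{2n-1}c_{2n}=\bigl((n-1)k+1\bigr)(nkx)=\beta_n$, so the even part of the first expansion is exactly the J-fraction of the previous paragraph, establishing the equality of the two displayed forms.

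The analytic work here is light because the substantive content---the explicit three-term recurrence---is already supplied by the previous Corollary. Consequently the only genuine step is the factorization of the recurrence coefficients into the linear Stieltjes coefficients $c_i$, and the main thing to get right is that this splitting is exact and consistent for every $n$ (rather than merely matching finitely many convergents); this uniform consistency is precisely what guarantees that the two displayed continued fractions are genuinely equal as formal power series.
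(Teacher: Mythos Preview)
Your proof is correct and follows essentially the same route as the paper: the paper reads off $\alpha_n=kn(1+x)+1$ and $\beta_n=nkx((n-1)k+1)$ directly from the tri-diagonal production matrix and then appeals to Theorem~\ref{CF} to obtain the J-fraction, with the S-fraction left implicit. Your explicit verification of the even-contraction identities $\alpha_n=c_{2n}+c_{2n+1}$, $\beta_n=c_{2n-1}c_{2n}$ is a welcome addition that the paper omits.
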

Note that in Del\'eham notation \cite{CFT} (and see Del\'eham's remarks in \seqnum{A084938}), this first continued fraction corresponds to
$$[1,0,k+1,0,2k+1,0,3k+1,0,\ldots]\,\Delta\,[0,k,0,2k,0,3k,0,\ldots].$$
\begin{corollary} The Hankel transform of the $1/k$-Eulerian polynomials is given by
$$h_n=(kx)^{\binom{n+1}{2}}(\prod_{j=0}^n j!)\prod_{i=1}^n (ik+1)^{n-i}=(kx)^{\binom{n+1}{2}} \prod_{i=1}^n i(ik+1)^{n-i}.$$
\end{corollary}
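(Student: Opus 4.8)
The plan is to apply the classical Hankel-determinant formula of Heilermann, which expresses the Hankel transform of a moment sequence directly in terms of the weights of its associated Jacobi continued fraction. Concretely, if a sequence $(\mu_n)$ has ordinary generating function
$$\sum_{n\ge 0}\mu_n z^n=\cfrac{\mu_0}{1-\alpha_0 z-\cfrac{\beta_1 z^2}{1-\alpha_1 z-\cfrac{\beta_2 z^2}{1-\cdots}}},$$
then its Hankel determinant $h_n=\det(\mu_{i+j})_{0\le i,j\le n}$ satisfies $h_n=\mu_0^{\,n+1}\prod_{i=1}^{n}\beta_i^{\,n+1-i}$. By the Proposition the $1/k$-Eulerian polynomials are a genuine moment sequence, with $\mu_0=A_0^{(k)}(x)=1$, so $h_n=\prod_{i=1}^n\beta_i^{\,n+1-i}$ and the entire task reduces to reading off the $\beta_i$ and simplifying the resulting product.

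First I would extract the weights $\beta_i$ from the Jacobi continued fraction of the preceding Corollary (equivalently, from the subtracted coefficient $k^2x(n^2-3n+2)+kx(n-1)$ of the three-term recurrence of the first Corollary). Matching numerators gives $\beta_1=kx$, $\beta_2=2kx(k+1)$, $\beta_3=3kx(2k+1),\ldots$, that is
$$\beta_i=i\,kx\,\bigl(k(i-1)+1\bigr),\qquad i\ge 1.$$
Substituting into $h_n=\prod_{i=1}^n\beta_i^{\,n+1-i}$ and separating the three factors yields
$$h_n=(kx)^{\sum_{i=1}^n (n+1-i)}\;\prod_{i=1}^n i^{\,n+1-i}\;\prod_{i=1}^n\bigl(k(i-1)+1\bigr)^{n+1-i}.$$
The exponent of $kx$ is $\sum_{i=1}^n(n+1-i)=\sum_{j=1}^n j=\binom{n+1}{2}$, which already produces the prefactor $(kx)^{\binom{n+1}{2}}$.

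It then remains to recognise the two remaining products in the claimed form. For the first, since $i$ occurs as a factor of $j!$ for exactly the $n+1-i$ values $j=i,i+1,\ldots,n$, one has the superfactorial identity $\prod_{i=1}^n i^{\,n+1-i}=\prod_{j=0}^n j!$; for the second, the substitution $i\mapsto i-1$ turns $\prod_{i=1}^n\bigl(k(i-1)+1\bigr)^{n+1-i}$ into $\prod_{i=0}^{n-1}(ki+1)^{n-i}$, whose $i=0$ factor is $1$, so it equals $\prod_{i=1}^n(ki+1)^{n-i}$. Combining the three pieces gives the first stated closed form, and the compact second expression is obtained by collecting the $i$-powers and the $(ki+1)$-powers under a single product sign as $\prod_{i=1}^n i^{\,n+1-i}(ki+1)^{n-i}$. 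I do not expect a genuine obstacle here, since the substantive work—the moment property and the continued fraction expansion—is already in hand; the only care needed is bookkeeping, namely aligning the determinant-size convention so that the $(n+1)\times(n+1)$ Hankel matrix is paired with the exponents $n+1-i$, and keeping the reindexing between $\bigl(k(i-1)+1\bigr)$ and $(ki+1)$ consistent.
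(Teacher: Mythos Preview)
Your approach is exactly the paper's: read off $\beta_i = i\,kx\,((i-1)k+1)$ from the Jacobi continued fraction (equivalently, from the tri-diagonal production matrix) and apply the Heilermann formula $h_n=\prod_{i=1}^n\beta_i^{\,n+1-i}$, then simplify the three factors. One remark: the second displayed closed form in the statement is a misprint (it should carry $i^{\,n+1-i}$ or equivalently $i!$, not a bare $i$), and indeed the expression $\prod_{i=1}^n i^{\,n+1-i}(ki+1)^{n-i}$ you arrive at is the correct one.
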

\begin{proposition} The Savage-Viswanathan polynomials $F_n(x,y)$ are the moments for a family of orthogonal polynomials whose coefficient array is given by the exponential Riordan array
$$\left[\frac{1}{(1+z)^y}, \frac{1}{1-x} \ln\left(\frac{1+z}{1+xz}\right)\right].$$
 The Savage-Viswanathan polynomials then appear as the initial column of the inverse array
$$\left[ \left(\frac{1-x}{e^{z(x-1)}-x}\right)^y, \frac{e^{z}-e^{xz}}{e^{xz}-x e^{z}}\right].$$
\end{proposition}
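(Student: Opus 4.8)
The plan is to run the same three-step argument that established Proposition 1, with the parameter pair $(k,1/k)$ replaced by $(1,y)$. Write $g(z)=1/(1+z)^y$ and $f(z)=\frac{1}{1-x}\ln\!\left(\frac{1+z}{1+xz}\right)$ for the entries of the coefficient array $[g,f]$. I would (i) compute the compositional inverse $\bar f$ and hence the inverse array $[g,f]^{-1}=[1/(g\circ\bar f),\,\bar f]$; (ii) identify the initial column of $[g,f]^{-1}$ with $F_n(x,y)$ by comparing exponential generating functions; and (iii) prove that $[g,f]$ is genuinely the coefficient array of a family of orthogonal polynomials by showing that the production matrix of $[g,f]^{-1}$ is tridiagonal. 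Since $F_n(x,y)=y^nA_n^{(1/y)}(x)$, one could instead deduce the statement from Proposition 1 by rescaling the moment functional by $c=y$; but the direct computation is cleaner and self-contained.

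For step (i), I set $w=f(z)$, so that $e^{(1-x)w}=\frac{1+z}{1+xz}$, and solve for $z$ to get $\bar f(w)=\frac{e^{(1-x)w}-1}{1-xe^{(1-x)w}}$; multiplying numerator and denominator by $e^{xw}$ turns this into $\bar f(w)=\frac{e^{w}-e^{xw}}{e^{xw}-xe^{w}}$, the second entry claimed for the inverse array. For step (ii), the inverse formula gives the first entry $1/g(\bar f(z))=(1+\bar f(z))^y$, and the simplification $1+\bar f(z)=\frac{(1-x)e^{(1-x)z}}{1-xe^{(1-x)z}}=\frac{1-x}{e^{(x-1)z}-x}$ shows this equals $\left(\frac{1-x}{e^{z(x-1)}-x}\right)^y$. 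By \eqref{SV} this is exactly the exponential generating function of $F_n(x,y)$, so the initial column of $[g,f]^{-1}$ is $(F_n(x,y))_{n\ge0}$, as claimed.

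Step (iii) is the crux. I would use the standard description of the production matrix of an exponential Riordan array $[G,F]$ through the two functions $A$ and $Z$ determined by $F'(z)=A(F(z))$ and $G'(z)/G(z)=Z(F(z))$: the production matrix is tridiagonal precisely when $A$ has degree at most $2$ and $Z$ degree at most $1$, in which case its three non-zero diagonals are read off directly from the coefficients of $A$ and $Z$. I apply this to the inverse array $[G,F]=[1/(g\circ\bar f),\,\bar f]$, and here the whole argument hinges on the clean identity $f'(z)=\frac{1}{(1+z)(1+xz)}$. It gives $A(w)=(\bar f)'(f(w))=1/f'(w)=(1+w)(1+xw)$, which is quadratic, and $Z(w)=y\,\frac{(\bar f)'(f(w))}{1+\bar f(f(w))}=y(1+xw)$ (using $\bar f(f(w))=w$), which is linear. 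Hence the production matrix of $[g,f]^{-1}$ is tridiagonal, and by Favard's theorem $F_n(x,y)$ is the moment sequence of a family of orthogonal polynomials whose coefficient array is $[g,f]$.

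Reading the diagonals off $A(w)=1+(1+x)w+xw^2$ and $Z(w)=y+xyw$ then yields the Jacobi entries $\alpha_n=(1+x)n+y$ and $\beta_n=nx(y+n-1)$, hence the three-term recurrence for the underlying orthogonal polynomials; as a consistency check, rescaling the coefficients of Corollary 2 by $c=y$ and specializing at $k=1/y$ reproduces exactly these $\alpha_n$ and $\beta_n$. The only real obstacle is the bookkeeping in step (iii): one must take the production matrix of the \emph{inverse} array $[g,f]^{-1}$ rather than of $[g,f]$ itself, and tridiagonality rests entirely on $f'$ being the reciprocal of a quadratic, exactly as in the proof of Proposition 1.
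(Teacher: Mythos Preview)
Your proof is correct and follows essentially the same approach as the paper: compute the $A$- and $Z$-functions of the moment array and observe they are quadratic and linear, respectively, so the production matrix is tridiagonal. Your only variation is computational---you obtain $A(w)=1/f'(w)$ and $Z(w)$ via the coefficient array $[g,f]$ and the chain rule, whereas the paper differentiates the (messier) entries of the inverse array directly; both routes yield $A(w)=(1+w)(1+xw)$ and $Z(w)=y(1+xw)$.
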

Note that we can write the last exponential Riordan array as
$$\left[e^{y z} \left(\frac{1-x}{e^{xz}-x e^{z}}\right)^y, \frac{e^{z}-e^{xz}}{e^{xz}-x e^{z}}\right].$$
\begin{corollary}
The Savage-Viswanathan polynomials $F_n(x,y)$ are the moments for the family of orthogonal polynomials $Q_n^{(x,y)}(z)$ that are defined by the three-term recurrence
$$Q_n^{(x,y)}(z)=(z-(y+(n-1)(1+x)))Q_{n-1}^{(x,y)}(z)-(n-1)(xy+(n-2)x)Q_{n-2}^{(x,y)}(z),$$
with $Q_0^{(x,y)}(z)=1$ and $Q_1^{(x,y)}(z)=z-y$.
\end{corollary}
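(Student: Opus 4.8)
The plan is to obtain the recurrence by reading it off the production matrix of the moment array, using the standard correspondence between orthogonal polynomials and tridiagonal production matrices. Recall that a sequence is the moment sequence of a monic orthogonal family satisfying $Q_n(z)=(z-\alpha_{n-1})Q_{n-1}(z)-\beta_{n-1}Q_{n-2}(z)$ exactly when the production matrix of its moment array is the tridiagonal Jacobi matrix with superdiagonal equal to $1$, main diagonal $(\alpha_0,\alpha_1,\dots)$ and subdiagonal $(\beta_1,\beta_2,\dots)$. By the Proposition the moment array is the exponential Riordan array
$$M=[G,F]=\left[\left(\frac{1-x}{e^{z(x-1)}-x}\right)^{y},\ \frac{e^{z}-e^{xz}}{e^{xz}-xe^{z}}\right],$$
whose initial column is $(F_n(x,y))_{n\ge 0}$; so it suffices to compute the production matrix of $M$ and exhibit it as tridiagonal with the predicted bands.

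For this I would use the production-matrix formula for exponential Riordan arrays. If $P=(p_{i,j})$ is the production matrix of $[G,F]$ and $\bar F$ denotes the compositional inverse of $F$, set
$$r(t)=F'(\bar F(t)),\qquad c(t)=\frac{G'(\bar F(t))}{G(\bar F(t))}.$$
Then $p_{i,j}=\frac{i!}{j!}\bigl([t^{\,i-j}]c(t)+j\,[t^{\,i-j+1}]r(t)\bigr)$. In particular $P$ is tridiagonal as soon as $c$ has degree at most $1$ and $r$ degree at most $2$: writing $c(t)=c_0+c_1t$ and $r(t)=r_0+r_1t+r_2t^{2}$, the superdiagonal entries equal $r_0$, the diagonal entries are $\alpha_n=c_0+n r_1$, and the subdiagonal entries are $\beta_n=n\,(c_1+(n-1)r_2)$.

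The crux is therefore to verify that $r$ and $c$ are polynomials of these low degrees, and here the special form of the array enters. Since $M=L^{-1}$ with $L=\bigl[(1+z)^{-y},\,\tfrac{1}{1-x}\ln\tfrac{1+z}{1+xz}\bigr]$, the compositional inverse of $F$ is precisely $f(t)=\frac{1}{1-x}\ln\frac{1+t}{1+xt}$, so $\bar F=f$ and $r(t)=F'(f(t))=1/f'(t)$. The key identity is
$$f'(t)=\frac{1}{1-x}\left(\frac{1}{1+t}-\frac{x}{1+xt}\right)=\frac{1}{(1+t)(1+xt)},$$
which gives the quadratic $r(t)=(1+t)(1+xt)=1+(1+x)t+xt^{2}$ (and $r_0=F'(0)=1$, confirming that the polynomials are monic). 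Similarly, because $M=L^{-1}$ we have $G(f(t))=1/g(t)=(1+t)^{y}$, so $\frac{d}{dt}\ln G(f(t))=\frac{y}{1+t}$, and hence $c(t)=\frac{1}{f'(t)}\cdot\frac{y}{1+t}=y(1+xt)=y+xy\,t$ is linear.

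Substituting $c_0=y,\ c_1=xy,\ r_0=1,\ r_1=1+x,\ r_2=x$ into the banded formula yields the tridiagonal production matrix with superdiagonal $1$, diagonal $\alpha_n=y+n(1+x)$, and subdiagonal $\beta_n=n(xy+(n-1)x)$. Matching $\alpha_{n-1}$ and $\beta_{n-1}$ against the Jacobi dictionary reproduces exactly
$$Q_n^{(x,y)}(z)=(z-(y+(n-1)(1+x)))Q_{n-1}^{(x,y)}(z)-(n-1)(xy+(n-2)x)Q_{n-2}^{(x,y)}(z),$$
with $Q_0^{(x,y)}(z)=1$ and $Q_1^{(x,y)}(z)=z-y$. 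I expect the only genuine obstacle to be establishing the polynomiality (equivalently, the two closed forms for $r$ and $c$); everything afterward is bookkeeping. As a check, the same computation should specialize, via $y=1/k$ together with the rescaling implicit in $A_n^{(k)}(x)=k^nF_n(x,1/k)$, to the earlier corollary on the $1/k$-Eulerian polynomials.
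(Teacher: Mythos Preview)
Your proposal is correct and follows essentially the same approach as the paper: compute the $A$- and $Z$-series (your $r$ and $c$) for the moment array, observe they are quadratic and linear respectively, and read the recurrence coefficients off the resulting tridiagonal production matrix. The only minor variation is that you compute $r$ and $c$ by exploiting the inverse array $L$ directly (via $r=1/f'$ and the logarithmic derivative of $G\circ f=(1+t)^{y}$), which is a small but tidy shortcut compared with the paper's direct differentiation of $G$ and $F$ followed by composition with $\bar F$; the outputs $r(t)=(1+t)(1+xt)$ and $c(t)=y(1+xt)$ and hence $\alpha_n=y+n(1+x)$, $\beta_n=n(xy+(n-1)x)$ are identical.
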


\begin{corollary} The Savage-Viswanathan polynomials have their ordinary generating function given by the continued fraction
$$\cfrac{1}{1-
\cfrac{yz}{1-
\cfrac{xz}{1-
\cfrac{(y+1)z}{1-
\cfrac{2xz}{1-
\cfrac{(y+2)z}{1-
\cfrac{3xz}{1-
\cfrac{(y+3)z}{1-\cdots}}}}}}}},$$ or equivalently
$$\cfrac{1}{1-yz-
\cfrac{xyz^2}{1-(x+y+1)z-
\cfrac{2x(y+1)z^2}{1-(2x+y+2)z-
\cfrac{3x(y+2)z^2}{1-(3x+y+3)z-\cdots}}}}.$$
\end{corollary}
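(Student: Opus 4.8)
The plan is to read both continued fractions off the three-term recurrence of the preceding corollary, using the classical correspondence between the moments of a family of orthogonal polynomials and Jacobi continued fractions. Recall that if a sequence $(\mu_n)$ is the moment sequence of a monic orthogonal family satisfying $Q_n(z) = (z - \beta_{n-1})Q_{n-1}(z) - \gamma_{n-1}Q_{n-2}(z)$ with $\mu_0 = 1$, then its ordinary generating function has the J-fraction expansion
$$\sum_{n \ge 0} \mu_n z^n = \cfrac{1}{1 - \beta_0 z - \cfrac{\gamma_1 z^2}{1 - \beta_1 z - \cfrac{\gamma_2 z^2}{1 - \cdots}}}.$$
Since the proposition identifying the Savage-Viswanathan polynomials as moments puts the $F_n(x,y)$ in exactly this setting with $F_0 = 1$, and the preceding corollary supplies the recurrence coefficients, the second displayed continued fraction should follow as soon as those coefficients are named.

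Accordingly, the first step is to extract from the recurrence the shifted coefficients $\beta_n = y + n(1+x)$ and $\gamma_n = nx(y + n - 1)$, obtained by replacing $n-1$ by $n$ in the stated formulas $\beta_{n-1} = y + (n-1)(1+x)$ and $\gamma_{n-1} = (n-1)(xy + (n-2)x)$. Substituting these into the template gives $\beta_0 = y$, $\beta_1 = x + y + 1$, $\beta_2 = 2x + y + 2, \ldots$ and $\gamma_1 = xy$, $\gamma_2 = 2x(y+1)$, $\gamma_3 = 3x(y+2), \ldots$, which is precisely the second continued fraction.

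The second step is to pass from this J-fraction to the Stieltjes form displayed first. For this I would invoke the standard even-contraction identity: the S-fraction with successive partial numerators $a_1 z, a_2 z, a_3 z, \ldots$ equals the J-fraction with $\beta_0 = a_1$, $\beta_n = a_{2n} + a_{2n+1}$ for $n \ge 1$, and $\gamma_n = a_{2n-1} a_{2n}$. Reading the first display gives $a_{2n-1} = y + (n-1)$ and $a_{2n} = nx$, whence a direct check yields $\beta_0 = y$, $\beta_n = nx + (y + n) = y + n(1+x)$, and $\gamma_n = (y + n - 1)(nx) = nx(y + n - 1)$, matching the coefficients above. The two continued fractions therefore represent the same series.

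The main obstacle here is not conceptual but a matter of keeping three indexing conventions mutually aligned, namely those of the recurrence, the J-fraction, and the contraction formula, since a single off-by-one would misplace the $\beta_n$ and $\gamma_n$; the only substantive input is the normalization $F_0 = 1$ guaranteed by the proposition, which is what makes the leading $1$ in each numerator correct.
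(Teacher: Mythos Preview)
Your proposal is correct and follows essentially the same approach as the paper: the paper simply asserts that ``the three-term recurrence, continued fractions and Hankel transforms now follow from these coefficients'' after displaying the tri-diagonal production matrix, and you have filled in precisely that deduction by naming the $\beta_n$ and $\gamma_n$ and invoking the standard J-fraction/S-fraction correspondence. Your explicit verification of the even-contraction identity linking the two displayed fractions is a detail the paper leaves implicit.
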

Note that in Del\'eham notation, the first continued fraction above corresponds to
$$[y,0,y+1,0,y+2,0,y+3,0,\ldots]\,\Delta\,[0,1,0,2,0,3,0,\ldots].$$
\begin{corollary} The Hankel transform of the Savage-Viswanathan polynomials is given by
$$h_n = x^{\binom{n+1}{2}}y^n \prod_{i=1}^n ((i+1)(i+y))^{n-i}.$$
\end{corollary}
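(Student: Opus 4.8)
The plan is to obtain the Hankel transform directly from the continued fraction of the previous Corollary, via the classical product formula relating the Hankel determinant of a moment sequence to the numerator coefficients of its Jacobi continued fraction. Concretely, if a sequence $(\mu_n)_{n\ge 0}$ with $\mu_0=1$ has ordinary generating function
$$\sum_{n\ge 0}\mu_n z^n=\cfrac{1}{1-\beta_0 z-\cfrac{\alpha_1 z^2}{1-\beta_1 z-\cfrac{\alpha_2 z^2}{1-\cdots}}},$$
then its Hankel transform $h_n=\det\bigl(\mu_{i+j}\bigr)_{0\le i,j\le n}$ equals $\prod_{i=1}^{n}\alpha_i^{\,n+1-i}$. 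Since $F_0(x,y)=1$, this applies verbatim to the Savage-Viswanathan polynomials, so the whole computation reduces to identifying the $\alpha_i$ and then simplifying a product.

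First I would read off the numerator coefficients from the second (Jacobi-type) continued fraction of the preceding Corollary, whose successive numerators are $xy\,z^2,\ 2x(y+1)\,z^2,\ 3x(y+2)\,z^2,\dots$; equivalently, these are the coefficients of $Q_{i-2}^{(x,y)}$ in the three-term recurrence, read with the usual unit shift of index. Either route yields the closed form
$$\alpha_i=i\,x\,(y+i-1),\qquad i\ge 1.$$

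Substituting into the product formula gives
$$h_n=\prod_{i=1}^{n}\bigl(i\,x\,(y+i-1)\bigr)^{\,n+1-i}=x^{\sum_{i=1}^{n}(n+1-i)}\prod_{i=1}^{n}\bigl(i(y+i-1)\bigr)^{\,n+1-i}.$$
Here the exponent of $x$ telescopes to $\sum_{i=1}^{n}(n+1-i)=\binom{n+1}{2}$, producing the claimed factor $x^{\binom{n+1}{2}}$. The last step is a reindexing: putting $j=i-1$ rewrites the remaining product as $\prod_{j=0}^{n-1}\bigl((j+1)(y+j)\bigr)^{\,n-j}$, whose $j=0$ term contributes the factor $y^{n}$ while the surviving terms $\prod_{j=1}^{n-1}\bigl((j+1)(y+j)\bigr)^{\,n-j}$ are precisely $\prod_{i=1}^{n}\bigl((i+1)(i+y)\bigr)^{\,n-i}$, the $i=n$ factor being $1$. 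Collecting these pieces gives
$$h_n=x^{\binom{n+1}{2}}\,y^{n}\prod_{i=1}^{n}\bigl((i+1)(i+y)\bigr)^{\,n-i},$$
as required.

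I expect no genuine obstacle here: all the analytic work was already carried out in establishing the continued fraction of the previous Corollary, and what remains is purely the bookkeeping of exponents together with the reindexing that peels off the factor $y^{n}$ from the first term of the product. The only point demanding a little care is matching the author's chosen index range $1\le i\le n$ (with a trivial $i=n$ factor) against the more natural range $0\le j\le n-1$ that arises from the formula.
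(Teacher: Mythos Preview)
Your proposal is correct and follows essentially the same approach as the paper: the paper derives the tridiagonal production matrix (equivalently, the three-term recurrence and continued fraction) and then simply asserts that the Hankel transform ``follows from these coefficients'' via the standard formula $h_n=\prod_{k=1}^n \beta_k^{\,n+1-k}$. You carry out exactly this computation, reading off $\alpha_i=i\,x\,(y+i-1)$ and doing the reindexing explicitly; the paper leaves that bookkeeping to the reader.
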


\section{Background material}
In this section, we give some background information on exponential Riordan arrays, orthogonal polynomials and Hankel transforms.

 The \emph{exponential Riordan group} \cite
{Barry_Pascal, DeutschShap, ProdMat}, is a set of
infinite lower-triangular integer matrices, where each matrix
is defined by a pair
of generating functions $g(x)=g_0+g_1x+g_2x^2+\cdots$ and
$f(x)=f_1x+f_2x^2+\cdots$ where $g_0 \ne 0$ and $f_1\ne 0$. We usually assume that
$$g_0=f_1=1.$$
The associated
matrix is the matrix
whose $i$-th column has exponential generating function
$g(x)f(x)^i/i!$ (the first column being indexed by $0$). The
matrix corresponding to
the pair $f, g$ is denoted by $[g, f]$.  The group law is given by \begin{displaymath}
[g,
f]\cdot [h,
l]=[g(h\circ f), l\circ f].\end{displaymath} The identity for
this law is $I=[1,x]$ and the inverse of $[g, f]$ is $[g,
f]^{-1}=[1/(g\circ
\bar{f}), \bar{f}]$ where $\bar{f}$ is the compositional
inverse of $f$.

If $\mathbf{M}$ is the matrix $[g,f]$, and
$\mathbf{u}=(u_n)_{n \ge 0}$
is an integer sequence with exponential generating function
$\mathcal{U}$
$(x)$, then the sequence $\mathbf{M}\mathbf{u}$ has
exponential
generating function $g(x)\mathcal{U}(f(x))$. This result is known as the fundamental theorem of (exponential) Riordan arrays. Thus the row sums
of the array
$[g,f]$ have exponential generating function given by $g(x)e^{f(x)}$ since the sequence
$1,1,1,\ldots$ has exponential generating function $e^x$.

As an element of the group of exponential Riordan arrays, the binomial matrix $\mathbf{B}$ with $(n,k)$-th element $\binom{n}{k}$ is given by
 $\mathbf{B}=[e^x,x]$. By the above, the exponential
generating function of
its row sums is given by $e^x e^x = e^{2x}$, as expected
($e^{2x}$ is the e.g.f. of $2^n$).

To each exponential Riordan array $L=[g,f]$ is associated \cite{ProdMat_0, ProdMat} a matrix $P$, called its \emph{production} matrix, which has
bivariate generating function given by
$$e^{xy}(Z(x)+A(x)y)$$ where
$$A(x)=f'(\bar{f}(x)), \quad Z(x)=\frac{g'(\bar{f}(x))}{g(\bar{f}(x))}.$$
We have $$P=L^{-1}\bar{L}$$ where $\bar{L}$ \cite{PPWW, Wall} is the matrix $L$ with its top row removed.

In order to demonstrate our results, we will require three results from the theory of exponential Riordan arrays. These are \cite{Classical, Barry_Meixner, Barry_Moment}
\begin{enumerate}
\item The inverse of an exponential Riordan array $[g,f]$ is the coefficient array of a family of orthogonal polynomials if and only if the production matrix of $[g,f]$ is tri-diagonal;
\item If the production matrix \cite{DeutschShap, ProdMat_0, ProdMat} of $[g,f]$ is tri-diagonal, then the elements of the first column of $[g,f]$ are the moments of the corresponding family of orthogonal polynomials;
\item The bivariate generating function of the production matrix of $[g,f]$ is given by
$$e^{xy}(Z(x)+A(x)y)$$ where
$$A(x)=f'(\bar{f}(x)),$$ and
$$Z(x)=\frac{g'(\bar{f}(x))}{g(\bar{f}(x))},$$ where $\bar{f}(x)$ is the compositional inverse (series reversion) of
$f(x)$.
\end{enumerate}

General information on Riordan arrays is provided in \cite{Book}.

For general information on orthogonal polynomials and moments, see \cite{Chihara, Gautschi, Szego}. Continued fractions will be referred to in the sequel; \cite{Wall} is a general reference, while \cite{Kratt, Kratt1} discuss the connection between continued fractions and orthogonal polynomials, and moments and Hankel transforms \cite{Layman, Radoux}. We recall that for a given sequence $a_n$ its Hankel transform is the sequence of determinants $h_n=|a_{i+j}|_{0 \le i,j \le n}$.

Many interesting examples of number triangles, including exponential Riordan arrays, can be found in Neil Sloane's On-Line
Encyclopedia of Integer Sequences \cite{SL1, SL2}. Sequences are frequently referred to by their
OEIS number. For instance, the binomial matrix (Pascal's triangle) $\mathbf{B}$ with $(n,k)$-th element $\binom{n}{k}$ is \seqnum{A007318}.

The following well-known results (the first is the well-known ``Favard's Theorem''), which we essentially reproduce from
\cite{Kratt}, specify the links between orthogonal polynomials, the three-term recurrences that define them, the recurrence coefficients of those three-term recurrences, and the g.f. of
the moment sequence of the orthogonal polynomials.
\begin{theorem} \label{ThreeT}\cite{Kratt} (Cf. \cite[Th\'eor\`eme 9 on p.I-4]{Viennot}, or \cite[Theorem 50.1]{Wall}). Let $(p_n(x))_{n\ge 0}$
be a sequence of monic polynomials, the polynomial $p_n(x)$ having degree $n=0,1,\ldots$ Then the sequence $(p_n(x))$ is (formally)
orthogonal if and only if there exist sequences $(\alpha_n)_{n\ge 0}$ and $(\beta_n)_{n\ge 1}$ with $\beta_n \neq 0$ for all $n\ge 1$,
such that the three-term recurrence
$$p_{n+1}=(x-\alpha_n)p_n(x)-\beta_n p_{n-1}(x), \quad \text{for}\quad n\ge 1, $$
holds, with initial conditions $p_0(x)=1$ and $p_1(x)=x-\alpha_0$.
\end{theorem}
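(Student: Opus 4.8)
The statement is Favard's theorem, so the plan is to prove the two implications separately, treating the ``orthogonal $\Rightarrow$ recurrence'' direction as routine and reserving the real work for the converse. Throughout I would realize ``(formally) orthogonal'' by a linear moment functional $\mathcal{L}$ on polynomials satisfying $\mathcal{L}[p_m p_n]=0$ for $m\neq n$ and $\mathcal{L}[p_n^2]\neq 0$.

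For the forward direction I would use that the monic polynomials $p_n$, having exact degree $n$, form a basis of the polynomial ring. Expanding $xp_n$ in this basis, monicity forces the coefficient of $p_{n+1}$ to be $1$, while for each $k\le n-2$ the coefficient equals $\mathcal{L}[x p_n p_k]/\mathcal{L}[p_k^2]=\mathcal{L}[p_n\,(xp_k)]/\mathcal{L}[p_k^2]=0$, since $xp_k$ then has degree at most $n-1<n$. This leaves exactly $xp_n=p_{n+1}+\alpha_n p_n+\beta_n p_{n-1}$, which is the asserted recurrence; and computing $\beta_n=\mathcal{L}[p_n^2]/\mathcal{L}[p_{n-1}^2]$ (using $xp_{n-1}=p_n+\text{lower order}$) shows $\beta_n\neq 0$.

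For the converse I would construct $\mathcal{L}$ by hand. I would define the moments $\mu_n=\mathcal{L}[x^n]$ recursively by $\mu_0=1$ together with the requirement $\mathcal{L}[p_n]=0$ for every $n\ge 1$; because $p_n$ is monic of degree $n$, this determines $\mu_n$ uniquely from $\mu_0,\dots,\mu_{n-1}$. I would then prove by induction on $n$ that $\mathcal{L}[x^m p_n]=0$ for $0\le m<n$ and $\mathcal{L}[x^n p_n]=\beta_1\beta_2\cdots\beta_n$. The inductive step applies the recurrence to write $\mathcal{L}[x^m p_{n+1}]=\mathcal{L}[x^{m+1}p_n]-\alpha_n\mathcal{L}[x^m p_n]-\beta_n\mathcal{L}[x^m p_{n-1}]$ and then reads off each term from the induction hypothesis.

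The main obstacle is exactly this converse, and within it the boundary case $m=n-1$: there the first term on the right becomes the nonzero quantity $\mathcal{L}[x^n p_n]$ and the last becomes $\beta_n\mathcal{L}[x^{n-1}p_{n-1}]$, so demanding $\mathcal{L}[x^{n-1}p_{n+1}]=0$ forces the norm relation $\mathcal{L}[x^n p_n]=\beta_n\,\mathcal{L}[x^{n-1}p_{n-1}]$. This is precisely where the hypothesis $\beta_n\neq 0$ is consumed: it guarantees that the squared norms $\mathcal{L}[p_n^2]=\mathcal{L}[x^n p_n]=\beta_1\cdots\beta_n$ never vanish, and hence that $(p_n)$ is genuinely orthogonal rather than merely satisfying a recurrence. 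This is the standard argument found in \cite{Chihara}, which I would follow.
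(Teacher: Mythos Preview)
The paper does not supply a proof of this theorem at all: Theorem~\ref{ThreeT} is quoted verbatim from \cite{Kratt} (with pointers to \cite{Viennot} and \cite{Wall}) as background material, and the ``proofs'' section of the paper is devoted entirely to the propositions about the $1/k$-Eulerian and Savage--Viswanathan arrays. So there is nothing in the paper to compare your argument against; your proposal simply fills in what the paper leaves to the cited references.

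On its own merits your sketch is the standard Chihara argument and is sound in outline, but one detail of the converse deserves tightening. With the recurrence written as $p_{n+1}=(x-\alpha_n)p_n-\beta_n p_{n-1}$ and induction run on $n$, the identity
\[
\mathcal{L}[x^m p_{n+1}]=\mathcal{L}[x^{m+1}p_n]-\alpha_n\mathcal{L}[x^m p_n]-\beta_n\mathcal{L}[x^m p_{n-1}]
\]
only lets you read off the right-hand side from the inductive hypothesis when $m\le n-1$; at $m=n$ the term $\mathcal{L}[x^{n+1}p_n]$ (and likewise $\mathcal{L}[x^n p_{n-1}]$) is not yet controlled, so the step does not close. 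The clean fix---and what Chihara actually does---is to induct on $m$ instead: use $xp_n=p_{n+1}+\alpha_n p_n+\beta_n p_{n-1}$ to write $\mathcal{L}[x^{m}p_n]=\mathcal{L}[x^{m-1}p_{n+1}]+\alpha_n\mathcal{L}[x^{m-1}p_n]+\beta_n\mathcal{L}[x^{m-1}p_{n-1}]$, which for $n>m$ reduces entirely to level $m-1$ with indices $\ge m$. The base case $m=0$ is exactly your construction $\mathcal{L}[p_n]=0$. With that adjustment your norm computation $\mathcal{L}[p_n^2]=\beta_1\cdots\beta_n$ and the use of $\beta_n\neq 0$ go through as you describe.
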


\begin{theorem} \label{CF} \cite{Kratt} (Cf. \cite[Proposition 1, (7), on p. V-5]{Viennot}, or \cite[Theorem 51.1]{Wall}). Let $(p_n(x))_{n\ge
0}$ be a sequence of monic polynomials, which is orthogonal with respect to some functional $\mathcal{L}$. Let
$$p_{n+1}=(x-\alpha_n)p_n(x)-\beta_n p_{n-1}(x), \quad \text{for}\quad n\ge 1, $$ be the corresponding three-term recurrence which is
guaranteed by Favard's theorem. Then the generating function
$$g(x)=\sum_{k=0}^{\infty} \mu_k x^k $$ for the moments $\mu_k=\mathcal{L}(x^k)$ satisfies
$$g(x)=\cfrac{\mu_0}{1-\alpha_0 x-
\cfrac{\beta_1 x^2}{1-\alpha_1 x -
\cfrac{\beta_2 x^2}{1-\alpha_2 x -
\cfrac{\beta_3 x^2}{1-\alpha_3 x -\cdots}}}}.$$
\end{theorem}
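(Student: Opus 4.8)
The plan is to prove this by identifying the moment generating function with a J-fraction through the polynomials of the second kind, using the orthogonality hypothesis only to control the order of contact of the successive convergents. First I would pass to the formal Stieltjes transform
$$G(x)=\mathcal{L}_t\!\left(\frac{1}{x-t}\right)=\sum_{k\ge 0}\mu_k\, x^{-k-1},$$
where $\mathcal{L}_t$ indicates that the functional $\mathcal{L}$ acts on the variable $t$ while $x$ is a parameter. This object is related to the sought generating function by $g(x)=\frac{1}{x}\,G(1/x)$, so it suffices to obtain a continued fraction for $G$ and then perform this substitution at the very end.

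Next I would introduce the associated polynomials of the second kind
$$q_n(x)=\mathcal{L}_t\!\left(\frac{p_n(x)-p_n(t)}{x-t}\right),$$
noting that $\frac{p_n(x)-p_n(t)}{x-t}$ is a genuine polynomial in $x$ and $t$, so that each $q_n$ is well defined. One checks directly that $q_0=0$ and $q_1=\mu_0$, and that, because $\mathcal{L}(p_n)=0$ for $n\ge 1$ by orthogonality, the $q_n$ satisfy the \emph{same} three-term recurrence $q_{n+1}=(x-\alpha_n)q_n-\beta_n q_{n-1}$ for $n\ge 1$. The whole argument then hinges on the single identity
$$p_n(x)G(x)-q_n(x)=\mathcal{L}_t\!\left(\frac{p_n(t)}{x-t}\right),$$
which follows at once from the definitions by adding and subtracting $p_n(x)/(x-t)$ inside $\mathcal{L}_t$.

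The crucial step, and the place where orthogonality is the entire point, is to expand the right-hand side as $\sum_{j\ge 0}\mathcal{L}(p_n(t)\,t^j)\,x^{-j-1}$ and to observe that $\mathcal{L}(p_n(t)\,t^j)=0$ for every $j<n$, since $p_n$ is orthogonal to all polynomials of degree less than $n$. Hence $p_n(x)G(x)-q_n(x)=O(x^{-n-1})$, so the rational functions $q_n(x)/p_n(x)$ agree with $G(x)$ to order $x^{-n-1}$ and therefore converge to $G$ in the topology of formal power series in $1/x$. Since the numerators $q_n$ and the denominators $p_n$ both satisfy the recurrence $u_{n+1}=(x-\alpha_n)u_n-\beta_n u_{n-1}$, the fundamental recurrence for the convergents of a continued fraction identifies $q_n/p_n$ as the $n$-th convergent of
$$G(x)=\cfrac{\mu_0}{x-\alpha_0-\cfrac{\beta_1}{x-\alpha_1-\cfrac{\beta_2}{x-\alpha_2-\cdots}}},$$
the partial numerators $\mu_0,\beta_1,\beta_2,\ldots$ and partial denominators $x-\alpha_0,x-\alpha_1,\ldots$ being read off directly from the recurrence coefficients.

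Finally I would substitute $x\mapsto 1/x$ and multiply through by $1/x$, invoking $g(x)=\frac{1}{x}G(1/x)$; clearing the resulting factor of $1/x$ level by level turns each partial denominator $1/x-\alpha_j$ into $1-\alpha_j x$ and each partial numerator $\beta_{j+1}$ into $\beta_{j+1}x^2$, producing exactly the J-fraction claimed in the statement. I expect the main obstacle to be expository rather than mathematical: making the convergence statement precise (each convergent $q_n/p_n$ is a well-defined power series in $1/x$ because $p_n$ is monic of degree $n$, and the order of contact $x^{-n-1}$ grows without bound), together with careful bookkeeping in the $x\mapsto 1/x$ conversion so that the factors of $x$ and $x^2$ land in the correct places.
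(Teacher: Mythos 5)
Your proof is correct. The paper does not actually prove this theorem---it reproduces it as a known result, citing Krattenthaler, Viennot, and Wall---and your argument via the second-kind polynomials $q_n$, the contact identity $p_n(x)G(x)-q_n(x)=\mathcal{L}_t\bigl(p_n(t)/(x-t)\bigr)=O(x^{-n-1})$, and the identification of $q_n/p_n$ with the convergents of the J-fraction is precisely the classical proof found in those references (indeed the contact is even of order $x^{-2n-1}$ after dividing by the monic $p_n$, though your weaker bound already suffices for formal convergence).
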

\noindent
The {\it
Hankel
transform} \cite{Layman} of a given sequence
$A=\{a_0,a_1,a_2,...\}$ is the
sequence of Hankel determinants $\{h_0, h_1, h_2,\dots \}$
where
$h_{n}=|a_{i+j}|_{i,j=0}^{n}$, i.e

\begin{center} \begin{equation}
 \label{gen1}
 A=\{a_n\}_{n\in\mathbb N_0}\quad \rightarrow \quad
 h=\{h_n\}_{n\in\mathbb N_0}:\quad
h_n=\left| \begin{array}{ccccc}
 a_0\ & a_1\  & \cdots & a_n  &  \\
 a_1\ & a_2\  &        & a_{n+1}  \\
\vdots &      & \ddots &          \\
 a_n\ & a_{n+1}\ &    & a_{2n}
\end{array} \right|. \end{equation} \end{center} The Hankel
transform of a sequence $a_n$ and that of its binomial transform are
equal.

\noindent In the case that $a_n$ has g.f. $g(x)$ expressible in the form
$$g(x)=\cfrac{a_0}{1-\alpha_0 x-
\cfrac{\beta_1 x^2}{1-\alpha_1 x-
\cfrac{\beta_2 x^2}{1-\alpha_2 x-
\cfrac{\beta_3 x^2}{1-\alpha_3 x-\cdots}}}}$$ then
we have \cite{Kratt}
\begin{equation}\label{Kratt} h_n = a_0^{n+1} \beta_1^n\beta_2^{n-1}\cdots \beta_{n-1}^2\beta_n=a_0^{n+1}\prod_{k=1}^n
\beta_k^{n+1-k}.\end{equation}
Note that this is independent of $\alpha_n$.

\section{The proofs}
We start with the exponential array for the $1/k$-Eulerian polynomials,
$$[g,f]=\left[\left(\frac{1-x}{e^{kz(x-1)}-x}\right)^{1/k}, \frac{e^{kz}-e^{kxz}}{k(e^{kxz}-x e^{kz})}\right].$$
Here,
$$f(z)=\frac{e^{kz}-e^{kxz}}{k(e^{kxz}-x e^{kz})}$$ and
$$\bar{f}(z)=\frac{1}{k(1-x)} \ln\left(\frac{1+kz}{1+kxz}\right).$$
Now $$f'(x)=\frac{(1-x)^2 e^{kz(1+x)}}{(e^{kxz}-xe^{kz})^2},$$ and hence we have
$$A(z)=f'(\bar{f}(z))=(1+kz)(1+kxz).$$
We have $$g(z)=\left(\frac{1-x}{e^{kz(x-1)}-x}\right)^{1/k},$$ and
hence
$$g'(z)=e^{z(1+kx)}\left(\frac{1-x}{e^{kxz}-xe^{kz}}\right)^{(k+1)/k}.$$
Then
$$Z(z)=\frac{g'(\bar{f}(z))}{g(\bar{f}(z))}=1+kxz.$$
Thus the production matrix, which has bivariate generating function given by
$$ e^{zw}(1+kxz+w(1+kz)(1+kxz))$$ is tri-diagonal.
The production matrix begins
$$\left(
\begin{array}{ccccccc}
 1 & 1 & 0 & 0 & 0 & 0 & 0 \\
 k x & k x+k+1 & 1 & 0 & 0 & 0 & 0 \\
 0 & 2 k^2 x+2 k x & 2 k x+2 k+1 & 1 & 0 & 0 & 0 \\
 0 & 0 & 6 k^2 x+3 k x & 3 k x+3 k+1 & 1 & 0 & 0 \\
 0 & 0 & 0 & 12 k^2 x+4 k x & 4 k x+4 k+1 & 1 & 0 \\
 0 & 0 & 0 & 0 & 20 k^2 x+5 k x & 5 k x+5 k+1 & 1 \\
 0 & 0 & 0 & 0 & 0 & 30 k^2 x+6 k x & 6 k x+6 k+1 \\
\end{array}
\right).$$
The three-term recurrence, continued fractions and Hankel transforms now follow from these coefficients.

We now turn to the moment matrix for the Savage-Viswanathan polynomials,
$$[g,f]=\left[ \left(\frac{1-x}{e^{z(x-1)}-x}\right)^y, \frac{e^{z}-e^{xz}}{e^{xz}-x e^{z}}\right].$$
We have
$$f(x)=\frac{e^{z}-e^{xz}}{e^{xz}-x e^{z}},$$ and hence
$$f'(x)=\frac{(1-x)^2e^{z(1+x)}}{(e^{xz}-xe^z)^2},$$
and
$$\bar{f}(x)= \frac{1}{1-x} \ln\left(\frac{1+z}{1+xz}\right).$$
We find that
$$A(z)=f'(\bar{f}(x))=(1+z)(1+xz).$$
Now $$g(x)= \left(\frac{1-x}{e^{z(x-1)}-x}\right)^y,$$ and hence
$$g'(x)=y e^{z(x+y)}\left(\frac{1-x}{e^{xz}-xe^z}\right)^{y+1}.$$
Hence
$$Z(z)=\frac{g'(\bar{f}(z))}{g(\bar{f}(z))}=y(1+xz).$$
Thus the production matrix, which has bivariate generating function given by
$$e^{zw}(y(1+xz)+w(1+z)(1+xz))$$ is tri-diagonal.
The production matrix then begins
$$\left(
\begin{array}{ccccccc}
 y & 1 & 0 & 0 & 0 & 0 & 0 \\
 x y & x+y+1 & 1 & 0 & 0 & 0 & 0 \\
 0 & 2 (x y+x) & 2 x+y+2 & 1 & 0 & 0 & 0 \\
 0 & 0 & 3 x y+6 x & 3 x+y+3 & 1 & 0 & 0 \\
 0 & 0 & 0 & 4 x y+12 x & 4 x+y+4 & 1 & 0 \\
 0 & 0 & 0 & 0 & 5 x y+20 x & 5 x+y+5 & 1 \\
 0 & 0 & 0 & 0 & 0 & 6 x y+30 x & 6 x+y+6 \\
\end{array}
\right).$$
The three-term recurrence, continued fractions and Hankel transforms now follow from these coefficients.

\section{Some remarks}
Using the fundamental theory of exponential Riordan arrays, we have

$$ \left(\frac{1-x}{e^{z(x-1)}-x}\right)^y=\left[1, z-\ln\left(\frac{e^{xz}-ze^x}{1-x}\right)\right]\cdot e^{yz}.$$
The expansion of $e^{yz}$ is the monomial sequence ${1,y,y^2,\ldots}$ and hence this says that when we expand the Savage-Viswanathan polynomials in $y$, the coefficient matrix is the exponential Riordan array
$$\left[1, z-\ln\left(\frac{e^{xz}-xe^z}{1-x}\right)\right]=\left[1, \frac{e^{z(x-1)}-1}{x-1}\right]\cdot \left[1, \ln\left(\frac{1}{1-x}\right)\right].$$ In this product of exponential Riordan arrays, the first matrix is the matrix of generalized Stirling numbers of the second kind with parameter $x-1$, while the second matrix represents the unsigned Stirling numbers of the first kind. Thus we have the following formula for the Savage-Visnawathan polynomials based on the Stirling numbers of the first and second kinds.
$$F_n(x,y)=\sum_{k=0}^n \sum_{j=0}^n S_{n,j}(x-1)^{n-j} |s_{j,k}|y^k,$$ where $S_{n,k}$ are the Stirling numbers of the second kind and $|s_{n,k}|$ are the unsigned Stirling numbers of the first kind.

The matrix $\left[1, z-\ln\left(\frac{e^{xz}-xe^z}{1-x}\right)\right]$ begins
\begin{scriptsize}
$$\left(
\begin{array}{ccccccc}
 1 & 0 & 0 & 0 & 0 & 0 & 0 \\
 0 & 1 & 0 & 0 & 0 & 0 & 0 \\
 0 & x & 1 & 0 & 0 & 0 & 0 \\
 0 & x^2+x & 3 x & 1 & 0 & 0 & 0 \\
 0 & x^3+4 x^2+x & 7 x^2+4 x & 6 x & 1 & 0 & 0 \\
 0 & x^4+11 x^3+11 x^2+x & 15 x^3+30 x^2+5 x & 25 x^2+10 x & 10 x & 1 & 0 \\
 0 & x^5+26 x^4+66 x^3+26 x^2+x & 31 x^4+146 x^3+91 x^2+6 x & 90 x^3+120 x^2+15 x & 65 x^2+20 x & 15 x & 1 \\
\end{array}
\right).$$
\end{scriptsize}
Specializing to integer values for $x$, we get the identity matrix for $x=0$, and for $x=1$ we get the triangle of (unsigned) Stirling numbers  of the first kind \seqnum{A132393} which begins
$$\left(
\begin{array}{ccccccc}
 1 & 0 & 0 & 0 & 0 & 0 & 0 \\
 0 & 1 & 0 & 0 & 0 & 0 & 0 \\
 0 & 1 & 1 & 0 & 0 & 0 & 0 \\
 0 & 2 & 3 & 1 & 0 & 0 & 0 \\
 0 & 6 & 11 & 6 & 1 & 0 & 0 \\
 0 & 24 & 50 & 35 & 10 & 1 & 0 \\
 0 & 120 & 274 & 225 & 85 & 15 & 1 \\
\end{array}
\right).$$
For $x=2$ we get the triangle \seqnum{A079641} which begins
$$\left(
\begin{array}{ccccccc}
 1 & 0 & 0 & 0 & 0 & 0 & 0 \\
 0 & 1 & 0 & 0 & 0 & 0 & 0 \\
 0 & 2 & 1 & 0 & 0 & 0 & 0 \\
 0 & 6 & 6 & 1 & 0 & 0 & 0 \\
 0 & 26 & 36 & 12 & 1 & 0 & 0 \\
 0 & 150 & 250 & 120 & 20 & 1 & 0 \\
 0 & 1082 & 2040 & 1230 & 300 & 30 & 1 \\
\end{array}
\right).$$ This is the product of Riordan arrays
$$[1, e^x-1] \cdot \left[1, \ln\left(\frac{1}{1-x}\right)\right]$$
where the first matrix is the triangle of Stirling numbers of the second kind and the second matrix is the triangle of unsigned Stirling numbers of the first kind. This is \seqnum{A008277}. In the Del\'eham notation, this is
$$ [0, 2, 1, 4, 2, 6, 3, 8, 4, 10, 5, \ldots ]\, \Delta\, [1, 0, 1, 0, 1, 0, 1, 0, 1, 0, \ldots].$$

For $x=3$ we obtain the matrix that begins
$$\left(
\begin{array}{ccccccc}
 1 & 0 & 0 & 0 & 0 & 0 & 0 \\
 0 & 1 & 0 & 0 & 0 & 0 & 0 \\
 0 & 3 & 1 & 0 & 0 & 0 & 0 \\
 0 & 12 & 9 & 1 & 0 & 0 & 0 \\
 0 & 66 & 75 & 18 & 1 & 0 & 0 \\
 0 & 480 & 690 & 255 & 30 & 1 & 0 \\
 0 & 4368 & 7290 & 3555 & 645 & 45 & 1 \\
\end{array}
\right),$$ which is the product
$$\left[1, \frac{e^{2x}-1}{2}\right]\cdot  \left[1, \ln\left(\frac{1}{1-x}\right)\right].$$

Since $$A_n^{(k)}(x)=k^n F_n(x,1/k)$$ we arrive at the following formula for the $1/k$-Eulerian polynomials.
$$A_n^{(k)}(x)=\sum_{m=0}^n \sum_{j=0}^n S_{n,j}(x-1)^{n-j} |s_{j,m}|k^{n-m}.$$

\section{The shifted $1/k$-Eulerian polynomials}
We have seen that the shifted $1/k$-Eulerian polynomials $A_n^{(k)}(x)$, which begin
$$1, 1, kx + 1, k^2 x^2 + kx(k + 3) + 1,\ldots,$$ are the moments for a family of orthogonal polynomials. We now turn our attention to the once-shifted sequence $A_{n+1}^{(k)}(x)$, which begins
$$ 1, kx + 1, k^2 x^2 + kx(k + 3) + 1,\ldots.$$ Its generating function is
$$\frac{d}{dz} \left(\frac{1-x}{e^{kz(x-1)}-x}\right)^{1/k}=e^{kz(x-1)} \left(\frac{1-x}{e^{kz(x-1)}-x}\right)^{1/k+1}.$$
\begin{proposition} The once-shifted $1/k$-Eulerian polynomials $A_{n+1}^{(k)}(x)$ are the moments for the family of orthogonal polynomials whose coefficient array is given by the exponential Riordan array
$$\left[\frac{1}{(1+kxz)(1+kz)^{1/k}}, \frac{1}{k(1-x)} \ln\left(\frac{1+kz}{1+kxz}\right)\right].$$ The once-shifted $1/k$-Eulerian polynomials $A_{n+1}^{(k)}(x)$ form the initial column of the inverse matrix
$$\left[e^{kz(x-1)} \left(\frac{1-x}{e^{kz(x-1)}-x}\right)^{1/k+1}, \frac{e^{kz}-e^{kxz}}{k(e^{kxz}-x e^{kz})}\right].$$
\end{proposition}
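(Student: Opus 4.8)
The plan is to follow exactly the template used in the proof of Proposition~1, exploiting the fact that the second component $f$ of the Riordan array is unchanged. I would begin by setting
$$[g,f]=\left[e^{kz(x-1)} \left(\frac{1-x}{e^{kz(x-1)}-x}\right)^{1/k+1}, \frac{e^{kz}-e^{kxz}}{k(e^{kxz}-x e^{kz})}\right],$$
the candidate moment (inverse) array. Since the $f$ appearing here is literally the $f$ of Proposition~1, its compositional inverse $\bar f(z)=\frac{1}{k(1-x)}\ln\big(\frac{1+kz}{1+kxz}\big)$ and the quantity $A(z)=f'(\bar f(z))=(1+kz)(1+kxz)$ carry over verbatim, requiring no new computation. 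The first column of $[g,f]$ has exponential generating function $g(z)$, which by the differentiation identity displayed immediately before the proposition is precisely the generating function of the once-shifted polynomials $A_{n+1}^{(k)}(x)$.

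The only genuinely new ingredient is $Z(z)=g'(\bar f(z))/g(\bar f(z))$. Rather than differentiate the new $g$ from scratch, I would write $g(z)=e^{kz(x-1)}\,g_0(z)^{k+1}$, where $g_0(z)=\big(\frac{1-x}{e^{kz(x-1)}-x}\big)^{1/k}$ is the $g$ of Proposition~1, and take the logarithmic derivative to obtain
$$\frac{g'(z)}{g(z)}=k(x-1)+(k+1)\frac{g_0'(z)}{g_0(z)}.$$
Evaluating at $z=\bar f(z)$ and substituting the already-known value $g_0'(\bar f(z))/g_0(\bar f(z))=1+kxz$ from Proposition~1 then gives $Z(z)=k(x-1)+(k+1)(1+kxz)=kx+1+k(k+1)xz$, which is linear in $z$. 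Consequently the production matrix of $[g,f]$, whose bivariate generating function is $e^{zw}\big(Z(z)+A(z)w\big)$, is linear in $w$ and hence tridiagonal; by the two cited results on exponential Riordan arrays this makes $[g,f]^{-1}$ the coefficient array of a family of orthogonal polynomials and identifies the first column of $[g,f]$, namely $A_{n+1}^{(k)}(x)$, as the corresponding moment sequence.

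To match the stated coefficient array I would finally verify, using $[g,f]^{-1}=[1/(g\circ\bar f),\bar f]$, that $g(\bar f(z))=(1+kxz)(1+kz)^{1/k}$. Here the computation $k\bar f(z)(x-1)=\ln\big(\frac{1+kxz}{1+kz}\big)$, so that $e^{k\bar f(z)(x-1)}=\frac{1+kxz}{1+kz}$, combined with $g_0(\bar f(z))=(1+kz)^{1/k}$ (again inherited from Proposition~1), collapses the product $e^{k\bar f(z)(x-1)}g_0(\bar f(z))^{k+1}$ to exactly $(1+kxz)(1+kz)^{1/k}$, confirming the first component of the coefficient array. I expect the only mild obstacle to be bookkeeping in the $Z$ computation and the exponent arithmetic $(k+1)/k-1=1/k$; both are routine once the factorization $g=e^{kz(x-1)}g_0^{k+1}$ and the inheritance of $f$, $\bar f$ and $A$ from Proposition~1 are in place, so no new structural difficulty arises.
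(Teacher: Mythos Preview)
Your proposal is correct and follows the same approach as the paper's proof: compute $Z(z)$ and $A(z)$ for the inverse array and observe that the production matrix is tridiagonal, obtaining the identical values $A(z)=(1+kz)(1+kxz)$ and $Z(z)=kx+1+k(k+1)xz=kx(z(k+1)+1)+1$. Your version is in fact more detailed than the paper's---you exploit the factorization $g=e^{kz(x-1)}g_0^{\,k+1}$ to recycle the computations from Proposition~1, and you additionally verify that $g(\bar f(z))=(1+kxz)(1+kz)^{1/k}$ so that the inverse array matches the stated coefficient array; one small caveat is that tridiagonality follows not from the generating function being ``linear in $w$'' (it always is) but from $Z$ being linear and $A$ quadratic in $z$, which you have correctly established.
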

\begin{proof}
We calculate $Z(z)$ and $A(z)$ for the Riordan array $\left[e^{kz(x-1)} \left(\frac{1-x}{e^{kz(x-1)}-x}\right)^{1/k+1}, \frac{e^{kz}-e^{kxz}}{k(e^{kxz}-x e^{kz})}\right]$.
This gives
$$Z(z)=kx(z(k+1)+1)+1, \quad\quad A(z)=(1+kz)(1+kxz).$$
Thus the production matrix is tri-diagonal, as required.
\end{proof}
\begin{corollary} The once-shifted $1/k$-Eulerian polynomials $A_{n+1}^{(k)}(x)$ have their ordinary generating function given by the continued fraction
$$\cfrac{1}{1-(kx+1)z-
\cfrac{kx(k+1)z^2}{1-(2kx+k+1)z-
\cfrac{2kx(2k+1)z}{1-(3kx+2k+1)z-
\cfrac{3kx(3k+1)z}{1-(4kx+3k+1)z-\cdots}}}}.$$
\end{corollary}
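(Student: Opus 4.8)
The plan is to read the three-term recurrence coefficients directly off the production matrix that was computed in the preceding Proposition, and then to invoke Theorem~\ref{CF}. That Proposition already establishes that the production matrix of the exponential Riordan array in question is tri-diagonal, with $Z(z)=kx(z(k+1)+1)+1$ and $A(z)=(1+kz)(1+kxz)$, and that the once-shifted polynomials $A_{n+1}^{(k)}(x)$ are its moment sequence (with $A_1^{(k)}(x)=1$ as the zeroth moment). It therefore remains only to extract the diagonal and subdiagonal entries of the production matrix and feed them into the continued fraction of Theorem~\ref{CF}.

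First I would expand the bivariate generating function $e^{zw}(Z(z)+A(z)w)$ of the production matrix to isolate its three nonzero diagonals. Writing $Z(z)=z_0+z_1 z+\cdots$ and $A(z)=a_0+a_1 z+a_2 z^2+\cdots$, the coefficient of $w^k$ is the $k$-th column generating function $\frac{z^k}{k!}Z(z)+\frac{z^{k-1}}{(k-1)!}A(z)$; comparing coefficients of $z^n/n!$ then shows that the superdiagonal entries equal $a_0=1$ (so the orthogonal polynomials are monic), the diagonal entries are $\alpha_n=z_0+a_1 n$, and the subdiagonal entries are $\beta_n=z_1 n+a_2 n(n-1)$. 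As a sanity check, these same formulas reproduce the coefficients of the (unshifted) three-term recurrence in the earlier Corollary, which gives me confidence in the bookkeeping.

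Substituting the coefficients of the $Z$ and $A$ above, namely $z_0=kx+1$, $z_1=kx(k+1)$, $a_1=k(1+x)$ and $a_2=k^2x$, I obtain $\alpha_n=(kx+1)+k(1+x)n$ together with
$$\beta_n=kx(k+1)n+k^2x\,n(n-1)=kx\,n\bigl((k+1)+k(n-1)\bigr)=kx\,n(kn+1).$$
In particular $\alpha_0=kx+1$, $\alpha_1=2kx+k+1$, $\alpha_2=3kx+2k+1$, and $\beta_1=kx(k+1)$, $\beta_2=2kx(2k+1)$, $\beta_3=3kx(3k+1)$, exactly the data appearing in the displayed fraction (the partial numerators being $\beta_n z^2$). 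Applying Theorem~\ref{CF} with $\mu_0=A_1^{(k)}(x)=1$ then yields the asserted continued fraction. I do not anticipate a genuine obstacle: all the substantive work was done in the Proposition, and the present statement is the routine simplification of $\beta_n$ to the closed form $kx\,n(kn+1)$ followed by a direct appeal to Theorem~\ref{CF}; the only point needing care is the purely formal matching of the Jacobi form (linear partial denominators, quadratic partial numerators) against the displayed expression.
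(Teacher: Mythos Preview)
Your proposal is correct and follows exactly the route the paper intends: the paper gives no separate proof of this corollary, but simply records $Z(z)=kx(z(k+1)+1)+1$ and $A(z)=(1+kz)(1+kxz)$ in the preceding Proposition and relies on the standing convention (stated after the earlier production matrices) that ``the three-term recurrence, continued fractions and Hankel transforms now follow from these coefficients.'' Your extraction of $\alpha_n=z_0+na_1$ and $\beta_n=nz_1+n(n-1)a_2$ from the bivariate generating function $e^{zw}(Z(z)+A(z)w)$, the simplification $\beta_n=kx\,n(kn+1)$, and the appeal to Theorem~\ref{CF} make that implicit step explicit; there is nothing to add.
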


We now turn our attention to the sequence $F_n(x,y)$, which begins
$$1, y, y(x + y), y(x^2 + x(3y + 1) + y^2), y(x^3 + x^2(7y + 4) + x(6y^2 + 4y + 1) + y^3),\ldots.$$
In this case, we consider the sequence $F_{n+1}(x,y)/y$, which begins
$$1, (x + y), (x^2 + x(3y + 1) + y^2), (x^3 + x^2(7y + 4) + x(6y^2 + 4y + 1) + y^3),\ldots.$$
The generating function of this sequence is
$$\frac{1}{y}\frac{d}{dz} \left(\frac{1-x}{e^{z(x-1)}-x}\right)^y=e^{z(x+y)}\left(\frac{1-x}{e^{z(x-1)}-x}\right)^{y+1}.$$
We then have the following proposition.
\begin{proposition}
The polynomials $F_{n+1}(x,y)/y$ are the moments for the family of orthogonal polynomials whose coefficient array is the exponential Riordan array
$$\left[\frac{1}{(1+xz)(1+z)^y}, \frac{1}{1-x} \ln\left(\frac{1+z}{1+xz}\right)\right].$$
These polynomials appear as the initial column of the inverse array
$$\left[e^{z(x+y)}\left(\frac{1-x}{e^{z(x-1)}-x}\right)^{y+1}, \frac{e^{z}-e^{xz}}{e^{xz}-x e^{z}}\right].$$
\end{proposition}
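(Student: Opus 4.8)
The plan is to follow the template already used for the unshifted Savage--Viswanathan array and for the once-shifted $1/k$-Eulerian polynomials: I take the candidate moment array $[g,f]$ given as the stated inverse array, compute its functions $A(z)=f'(\bar f(z))$ and $Z(z)=g'(\bar f(z))/g(\bar f(z))$, and verify that the production matrix, whose bivariate generating function is $e^{zw}(Z(z)+A(z)w)$, is tridiagonal. Tridiagonality then triggers the two structural results quoted in the background: the inverse of $[g,f]$ is the coefficient array of a family of orthogonal polynomials, and the first column of $[g,f]$ is their moment sequence. Since the first column of an exponential Riordan array $[g,f]$ has exponential generating function $g$, and we have already identified $g$ with the generating function of $F_{n+1}(x,y)/y$, this is exactly the statement to be proved.

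First I would note that the $f$-component $f(z)=\frac{e^{z}-e^{xz}}{e^{xz}-x e^{z}}$ is literally the same series as in the unshifted Savage--Viswanathan array. Hence its compositional inverse is unchanged, $\bar f(z)=\frac{1}{1-x}\ln\!\left(\frac{1+z}{1+xz}\right)$, and the earlier evaluation $A(z)=f'(\bar f(z))=(1+z)(1+xz)$ carries over verbatim. In particular $A$ is a polynomial of degree $2$ in $z$, as needed.

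The one genuinely new calculation is $Z(z)$ for the shifted weight. I would compute it as a logarithmic derivative, $\frac{g'}{g}=\frac{d}{dz}\ln g$, so that the work reduces to differentiating $z(x-1)+(y+1)\big[\ln(1-x)-\ln(e^{z(x-1)}-x)\big]$; this gives $\frac{g'}{g}=(x-1)-(y+1)\frac{(x-1)e^{z(x-1)}}{e^{z(x-1)}-x}$. The decisive simplification is the identity $e^{(x-1)\bar f(z)}=\frac{1+xz}{1+z}$, immediate from the closed form of $\bar f$; substituting it makes the exponential fraction collapse to $-(1+xz)$, and after combining constants one is left with $Z(z)=(x+y)+(y+1)xz$, a polynomial of degree $1$. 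A quick sanity check is that the constant term $Z(0)=x+y$ equals the diagonal entry $\alpha_0$ of the production matrix, which must coincide with the first nontrivial moment $F_2(x,y)/y=x+y$.

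With $A$ of degree $2$ and $Z$ of degree $1$, reading off the coefficient of $w^{k}$ in $e^{zw}(Z(z)+A(z)w)$ shows that each such coefficient is supported on $z^{k-1},z^{k},z^{k+1}$, so the production matrix has nonzero entries only on the main, first sub- and first super-diagonals; it is tridiagonal, and the proof concludes. I expect the main obstacle to be purely the algebra of the third step: one must first pin down $g$ unambiguously --- the two natural closed forms use denominators $e^{z(x-1)}-x$ and $e^{xz}-xe^{z}$, which differ by a power of $e^{z}$, so the correct prefactor is $e^{z(x-1)}$ (equivalently $e^{z(x+y)}$ against the second denominator) --- and then check that the leftover $e^{z(x-1)}$ contributions cancel so that $Z$ genuinely drops to degree $1$ rather than carrying a residual exponential.
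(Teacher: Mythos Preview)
Your proposal is correct and follows essentially the same route as the paper: compute $A(z)$ and $Z(z)$ for the moment array, observe that $A$ has degree $2$ and $Z$ degree $1$, and conclude that the production matrix is tridiagonal. Your value $Z(z)=(x+y)+(y+1)xz$ agrees with the paper's $Z(z)=x(z(1+y)+1)+y$, and your caution about the two equivalent forms of $g$ (prefactor $e^{z(x-1)}$ against $e^{z(x-1)}-x$ versus $e^{z(x+y)}$ against $e^{xz}-xe^{z}$) is well placed, since the displayed $g$ in the statement mixes them.
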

\begin{proof} We calculate $Z(z)$ and $A(z)$ for the moment array
$\left[e^{z(x+y)}\left(\frac{1-x}{e^{z(x-1)}-x}\right)^{y+1}, \frac{e^{z}-e^{xz}}{e^{xz}-x e^{z}}\right]$.
We obtain
$$Z(z)=x(z(1+y)+1)+y,\quad\quad A(z)=(1+z)(1+xz).$$
We conclude that the production matrix of the moment matrix is tri-diagonal, as required.
\end{proof}
\begin{corollary} The once-shifted Savage-Viswanathan polynomials have their ordinary generating function given by the continued fraction
$$\cfrac{1}{1-(x+y)z-
\cfrac{x(y+1)z^2}{1-(2x+y+1)z-
\cfrac{2x(y+2)z^2}{1-(3x+y+2)z-
\cfrac{3x(y+3)z^2}{1-(4x+y+3)z-\cdots}}}}.$$
\end{corollary}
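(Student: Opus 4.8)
The plan is to extract the three-term recurrence coefficients from the production matrix computed in the preceding proposition, and then to feed them into the continued-fraction expansion of Theorem \ref{CF}. From that proposition we already know that the moment array has tri-diagonal production matrix determined by
$$Z(z)=x(z(1+y)+1)+y=(x+y)+x(1+y)z,\qquad A(z)=(1+z)(1+xz)=1+(1+x)z+xz^2.$$
Since the bivariate generating function of the production matrix is $e^{zw}(Z(z)+A(z)w)$, the only substantive work is to read off its diagonal and subdiagonal entries, which are exactly the $\alpha_n$ and $\beta_n$ of Favard's theorem.

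First I would expand $e^{zw}(Z(z)+A(z)w)$ and collect the coefficient of $w^k$, which is $\frac{z^k}{k!}Z(z)+\frac{z^{k-1}}{(k-1)!}A(z)$. Multiplying by $n!$ and extracting $[z^n]$ gives the entry $P_{n,k}=n!\bigl(\frac{Z_{n-k}}{k!}+\frac{A_{n-k+1}}{(k-1)!}\bigr)$. The diagonal entry is then $\alpha_n=P_{n,n}=Z_0+nA_1=(x+y)+n(1+x)$, and the subdiagonal entry is $\beta_n=P_{n,n-1}=nZ_1+n(n-1)A_2=nx(1+y)+n(n-1)x=nx(n+y)$. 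One checks that these agree with the sequences $\alpha_0=x+y,\ \alpha_1=2x+y+1,\ \alpha_2=3x+y+2,\ldots$ and $\beta_1=x(y+1),\ \beta_2=2x(y+2),\ \beta_3=3x(y+3),\ldots$ appearing in the claimed continued fraction.

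Finally, since the first column of the inverse (moment) array begins with $F_1(x,y)/y=1$, we have $\mu_0=1$, and Theorem \ref{CF} converts the recurrence coefficients $\alpha_n$ and $\beta_n$ directly into the asserted $J$-fraction. I expect no genuine obstacle: the entire argument is the routine transcription of the tri-diagonal production matrix into the $(\alpha_n,\beta_n)$ data guaranteed by Favard's theorem. The only place that requires care is the index bookkeeping when reading $P_{n,n}$ and $P_{n,n-1}$ off the bivariate generating function, ensuring that the factor $n!$ and the unit shift in $w$-degree coming from the $A(z)w$ term are handled correctly.
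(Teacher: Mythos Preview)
Your proposal is correct and follows exactly the route the paper takes (implicitly): the paper states this corollary without separate proof, relying on the $Z(z)=x(z(1+y)+1)+y$ and $A(z)=(1+z)(1+xz)$ computed in the preceding proposition together with Theorem~\ref{CF}, and you have simply made explicit the extraction of $\alpha_n=Z_0+nA_1=(n+1)x+y+n$ and $\beta_n=nZ_1+n(n-1)A_2=nx(n+y)$ from the bivariate generating function $e^{zw}(Z(z)+A(z)w)$. Your index bookkeeping is correct, and the verification against the displayed production matrices in the paper confirms the formulas.
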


\section{Sample sequences}
The sequence \seqnum{A226513}, based on \cite{Ahlbach}, documents the array of sequences $F_n(2,y)$, $y \in \mathbb{N}$. These sequences count the number of barred preferential arrangements of $k$ things with $n$ bars ($k \ge 0$, $n \ge 0$). The sequence $F_n(2,1)$ is \seqnum{A000670}, the Fubini numbers (number of preferential arrangements of $n$ labeled elements). The sequence $F_n(2,2)$ is \seqnum{A005649}, the self-convolution of the preferential arrangements (this counts the number of compatible bi-partitional relations on a set of cardinality $n$). The sequence $F_n(2,3)$ is \seqnum{A226515}. The sequence $F_n(2,4)$ is \seqnum{A226738}.

The sequence $F_n(1,k)$ is given by $\frac{(n+k-1)!}{(k-1)!}$. These sequences are \seqnum{A000007}, \seqnum{A000142}, \seqnum{A000142}$(n+1)$, \seqnum{A001710}$(n+2)$, \seqnum{A001715} for $k=0,1,2,3,4$ respectively.

\bigskip
\hrule
\bigskip
\noindent 2010 {\it Mathematics Subject Classification}: Primary
11B83; Secondary 33C45, 42C-5, 15B36, 15B05, 11C20.
\noindent \emph{Keywords:} Eulerian polynomial, exponential Riordan array, orthogonal polynomial, moment sequence

\bigskip
\hrule
\bigskip
\noindent (Concerned with sequences
\seqnum{A000007},
\seqnum{A000142},
\seqnum{A000670},
\seqnum{A001710},
\seqnum{A001715},
\seqnum{A005649},
\seqnum{A007318},
\seqnum{A008277},
\seqnum{A079641},
\seqnum{A084938},
\seqnum{A132393},
\seqnum{A226513},
\seqnum{A226515} and
\seqnum{A226738}.)

\end{document}